\renewenvironment{proof}[1][\proofname] {\par\pushQED{\qed}\normalfont\topsep6\p@\@plus6\p@\relax\trivlist\item[\hskip\labelsep\bfseries#1\@addpunct{.}]\ignorespaces}{\popQED\endtrivlist\@endpefalse}
\newtheorem{theorem}{\bf Theorem}[section]
\newtheorem{lemma}[theorem]{\bf Lemma}
\newtheorem{df}[theorem]{\bf Definition}
\newtheorem{corollary}[theorem]{\bf Corollary}
\newtheorem{problem}[theorem]{\bf Problem}
\theoremstyle{definition}
\def\eps{\varepsilon}
\newcommand{\trace}{\text{tr}}
\title{Short monochromatic odd cycles}
\author{Oliver Janzer\thanks{Institute of Mathematics, EPFL, Lausanne, Switzerland. Email: \textbf{oliver.janzer@epfl.ch}.}
	\and
Fredy Yip\thanks{Trinity College, University of Cambridge, United Kingdom. Email: \textbf{fy276@cam.ac.uk}.}}
\date{}
\begin{document}

\maketitle

\begin{abstract}
    It is easy to see that every $k$-edge-colouring of the complete graph on $2^k+1$ vertices contains a monochromatic odd cycle. In 1973, Erd\H os and Graham asked to estimate the smallest $L(k)$ such that every $k$-edge-colouring of $K_{2^k+1}$ contains a monochromatic odd cycle of length at most $L(k)$. Recently, Gir\~ao and Hunter obtained the first nontrivial upper bound by showing that $L(k)=O(\frac{2^k}{k^{1-o(1)}})$, which improves the trivial bound by a polynomial factor. We obtain an exponential improvement by proving that $L(k)=O(k^{3/2}2^{k/2})$. Our proof combines tools from algebraic combinatorics and approximation theory.
\end{abstract}

\section{Introduction}

Ramsey theory is the branch of combinatorics concerned with finding ordered substructures in large, potentially disordered objects. In graph Ramsey theory, we are usually interested in finding certain monochromatic subgraphs in edge-coloured graphs. This is an area that has seen some remarkable breakthroughs in the last few years; see \cite{conlon2021lower,campos2023exponential,mattheus2024asymptotics,balister2024upper}, for example.

In all of these recent breakthroughs, the number of colours is fixed as the other parameters grow. In general, our understanding of graph Ramsey problems in which the number of colours grows is rather poor. For example, the notorious Schur--Erd\H os problem asks whether the $k$-colour Ramsey number of the triangle (denoted as $R_k(K_3)$) grows exponentially in $k$, and this is wide open. More generally, despite much attention \cite{EG73,bondy1973ramsey,day2017multicolour,lin2019new}, we do not know the order of growth of the $k$-colour Ramsey number $R_k(C_{2\ell+1})$ of any fixed odd cycle $C_{2\ell+1}$.

The case of even cycles is easier, as it follows from the best upper bound on the Tur\'an number of $C_{2\ell}$ that $R_k(C_{2\ell})=O_{\ell}(k^{\frac{\ell}{\ell-1}})$. Concerning the case of a fixed number of colours and growing cycle length, Jenssen and Skokan \cite{jenssen2021exact} proved that for all fixed $k$ and sufficiently large $\ell$, we have $R_k(C_{2\ell+1})=2^k\ell+1$.

In this paper we study an old problem of Erd\H os and Graham \cite{EG73} that closely resembles the notorious multicolour Ramsey problem for odd cycles. It is easy to see that the complete graph on $2^k$ vertices can be $k$-edge-coloured so that each colour class is bipartite, but in any $k$-edge-colouring of the complete graph on $2^k+1$ vertices there exists a monochromatic odd cycle. Motivated by this, in 1975, Erd\H os and Graham \cite{EG73} asked to estimate the smallest $L(k)$ such that every $k$-edge-colouring of the complete graph on $2^k+1$ vertices has a monochromatic odd cycle of length at most $L(k)$.

\begin{problem}[Erd\H os and Graham \cite{EG73}]
    What is the smallest $L(k)$ such that every $k$-edge-colouring of the complete graph on $2^k+1$ vertices has a monochromatic odd cycle of length at most $L(k)$?
\end{problem}

This question also appears as Problem 75 in \cite{chung1997open}, as Problem 609 on  \url{https://www.erdosproblems.com/609} and on the webpage \url{https://mathweb.ucsd.edu/~erdosproblems/}.

Chung \cite{chung1997open} asked whether $L(k)$ is unbounded. This was answered affirmatively by Day and Johnson.

\begin{theorem}[Day and Johnson \cite{day2017multicolour}]
    We have $L(k)\geq 2^{\Omega{(\sqrt{\log k})}}$.
\end{theorem}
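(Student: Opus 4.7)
The plan is to construct, by iterative blow-up from a sequence of base colourings of growing monochromatic odd girth, a $k$-edge-colouring of some $K_{n}$ with $n>2^k$ in which every colour class has odd girth at least $2^{\Omega(\sqrt{\log k})}$. The basic operation is a product lemma: given a $k_1$-edge-colouring of $K_{n_1}$ with $n_1>2^{k_1}$ and a $k_2$-edge-colouring of $K_{n_2}$ with $n_2>2^{k_2}$, each with monochromatic odd girth at least $g$, one obtains a $(k_1+k_2)$-edge-colouring of $K_{n_1 n_2}$ with the same guarantee. The construction uses vertex set $[n_1]\times[n_2]$: colour an edge $\{(u_1,u_2),(v_1,v_2)\}$ by the first colouring applied to $\{u_1,v_1\}$ when $u_1\neq v_1$, and by the (shifted) second colouring applied to $\{u_2,v_2\}$ when $u_1=v_1$. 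Each new colour class is either a blow-up or a disjoint union of copies of an original colour class, so the odd girth is preserved; and $n_1 n_2>2^{k_1+k_2}$.

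Next I would build nontrivial base cases of growing odd girth. The trivial bipartite decomposition of $K_{2^k}$ has infinite odd girth but only $2^k$ vertices, so on its own it contributes nothing on $K_{2^k+1}$. The first nontrivial base comes from $R(3,3)=6$, giving a $2$-edge-colouring of $K_5$ with no monochromatic triangle (odd girth $\geq 5$), hence $L(2)\geq 5$. For larger odd girth I would iteratively amplify: from a colouring with odd girth $g$, construct one with odd girth $2g$ at the price of multiplying the number of colours by $O(g)$. Such an amplification can be realised by combining the given colouring with an auxiliary graph of large odd girth (e.g.\ a shift graph, a Kneser or Schrijver graph, or a Cayley graph on $\mathbb{F}_2^d$ whose generating set avoids short odd $\mathbb{F}_2$-linear relations), chosen so that every short odd cycle in the combined colouring must project to a short odd closed walk in one of the factors, contradicting one of the odd-girth guarantees.

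Combining the base case with $t$ iterated amplification steps yields odd girth $g_t\approx 5\cdot 2^t$ using approximately $k_t\approx\prod_{i<t}g_i\approx 2^{t(t-1)/2}$ colours. Rearranging, odd girth $g$ is achieved with $k=g^{O(\log g)}$ colours, so $\log g\geq\Omega(\sqrt{\log k})$, i.e.\ $L(k)\geq 2^{\Omega(\sqrt{\log k})}$.

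The main obstacle is the amplification step. The product lemma alone preserves but does not grow the odd girth, and naively splitting a colour class tends to create new short odd cycles at the interface between sub-classes. The heart of the proof is to design an interaction between two colourings such that every short odd cycle in the resulting colouring is forced to project to a short odd closed walk in each factor, so that the cycle's length inherits lower bounds from both. Making this amplification quantitatively precise — at only a polynomial-in-$g$ multiplicative colour cost per doubling of odd girth — is what ultimately delivers the bound $2^{\Omega(\sqrt{\log k})}$.
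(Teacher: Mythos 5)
This theorem is not proved in the paper you are annotating --- it is quoted from Day and Johnson --- so your proposal has to stand on its own. Its skeleton is sound and does match the shape of Day and Johnson's argument. The product lemma is correct as stated: in the lexicographic-product colouring of $K_{n_1n_2}$, each colour class is either a blow-up of a class of the first colouring by independent sets or a disjoint union of copies of a class of the second, and both operations preserve odd girth (an odd cycle in a blow-up projects to an odd closed walk of the same length in the base graph, which contains an odd cycle of at most that length); moreover $n_1n_2>2^{k_1+k_2}$. Taking the second factor to be the trivial bipartite decomposition of $K_{2^{k_2}}$ (whose classes have infinite odd girth) this already shows that a single base example with $k_0$ colours and odd girth $>g$ yields $L(k)\geq g$ for \emph{all} $k\geq k_0$, and your bookkeeping $k_t\approx 2^{O(t^2)}$ for odd girth $\approx 2^t$ correctly inverts to $2^{\Omega(\sqrt{\log k})}$.

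The genuine gap is the one you flag yourself: the amplification lemma is asserted, not constructed, and it is the entire content of the theorem. ``Combine with a shift graph / Kneser graph / Cayley graph on $\mathbb{F}_2^d$'' does not specify a colouring, and the obstacle is sharper than you describe. What you need after one amplification is a \emph{decomposition} of a complete graph on more than $2^{K}$ vertices into $K\approx gk$ classes each of odd girth $>2g$; a single auxiliary graph of large odd girth is useless unless all $K$ classes simultaneously have large odd girth while their union is complete on $>2^K$ vertices, and the theta-function bound proved in this very paper shows this forces every class to be only barely non-bipartite, so the construction has essentially no slack. Concretely: a product of $O(g)$ copies of the base colouring delivers the required vertex count $n^{O(g)}>2^{O(g)k}$ and colour count, but leaves the odd girth at $g$; any re-mixing of the copies intended to push the odd girth to $2g$ is exactly where short odd cycles reappear, and you give no mechanism forcing every odd cycle of length $<2g$ in the mixed colouring to project to a short odd closed walk in some factor. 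Until that lemma is stated with explicit parameters and proved, your argument establishes only the base case $L(k)\geq 5$ for $k\geq 2$.
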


The first nontrivial upper bound was obtained recently by Gir\~ao and Hunter.

\begin{theorem}[Gir\~ao and Hunter \cite{girao2024monochromatic}]
    For every $\varepsilon>0$, there is $k_0>0$ such that we have $L(k)\leq \frac{2^k+1}{k^{1-\varepsilon}}$ for all $k\geq k_0$.
\end{theorem}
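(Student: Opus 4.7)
The plan is to argue by contradiction. Fix $\varepsilon>0$, set $L:=\lceil(2^k+1)/k^{1-\varepsilon}\rceil$, and suppose that some $k$-edge-colouring of $K_n$ with $n=2^k+1$ has every colour class $G_c$ of odd girth strictly greater than $L$. The basic structural consequence I would exploit is that each $G_c$ is \emph{locally bipartite} at scale $L$: the BFS layering of $G_c$ from any vertex $v$ gives an honest bipartition on the ball of radius $\lfloor L/2\rfloor$ around $v$, since an edge inside a single layer would close a monochromatic odd cycle of length at most $L$.

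The core of the strategy is an iterative "peeling" that uses each colour once. Starting from a $k'$-colouring of $K_N$ (initially $k'=k$, $N=n$), I would locate a dense monochromatic structure in some colour $c$, extract an honest bipartition from it via the local-bipartition property, restrict to its larger side, and recurse on the induced complete subgraph with the remaining $k'-1$ colours. As a first attempt one could take the monochromatic connected component of size at least $N/(k'-1)$ supplied by Gyárfás's theorem, and shrink it to a BFS-ball of radius $\lfloor L/2\rfloor$ around a well-chosen centre; this produces a genuinely bipartite subset whose larger side deletes colour $c$ from the induced complete subgraph. Iterating for $k$ rounds should terminate with at most one vertex, contradicting $n=2^k+1$ provided the multiplicative losses at each step fit inside the budget.

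The main obstacle is precisely controlling those losses. A naive iteration loses a factor of roughly $2(k'-1)$ per round and only recovers the trivial bound $n\leq k!\cdot 2^k$, which is catastrophic. To obtain the polynomial gain of $k^{1-\varepsilon}$, the extraction step must beat the factor $2k'$ by roughly $k^{\varepsilon/k}$ per round, so that the cumulative saving over all $k$ rounds is the claimed $k^{1-\varepsilon}$. This is where the odd-girth hypothesis has to be used globally and not merely within a single BFS ball: I would try to show that high odd girth forces monochromatic bipartite substructures of diameter $\leq L/2$ to capture a slightly-above-average fraction of vertices — for instance by combining the local BFS bipartitions from many roots simultaneously so that an averaging argument yields a uniform gain, or by a double-counting argument on short odd closed walks whose absence concentrates mass on one side of the bipartition. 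Balancing this per-step gain against the diameter-truncation loss, and verifying that $L=(2^k+1)/k^{1-\varepsilon}$ is exactly the right scale for the cumulative inequality to beat $2^k+1$, is the delicate quantitative step and will be the main thing that has to be made to work.
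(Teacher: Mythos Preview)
The paper does not prove this statement: it is quoted as a prior result of Gir\~ao and Hunter, and the paper then supersedes it with the much stronger bound $L(k)=O(k^{3/2}2^{k/2})$ via an entirely different method (the Lov\'asz theta function of the complement, combined with a Chebyshev-polynomial trace argument). There is therefore no in-paper proof to compare your proposal against.

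Evaluated on its own terms, your proposal is a plan with the decisive step explicitly left open, and that gap is genuine. In your peeling scheme the dominant loss is not the Gy\'arf\'as factor or the factor~$2$ from passing to one side of a bipartition: it is the truncation of a monochromatic connected component down to a BFS ball of radius $\lfloor L/2\rfloor$. High odd girth tells you nothing about the diameter of a colour class --- a colour class could be a long path or a sparse tree --- so that ball may capture an arbitrarily small fraction of the component, and your per-round shrinkage can be far worse than the $2(k'-1)$ you quote. The remedies you float (averaging local bipartitions over many roots, double-counting short odd closed walks) do not produce any lower bound on ball sizes; absence of short odd closed walks constrains parity, not growth. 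Without a concrete mechanism turning odd girth $>L$ into a quantitative covering or expansion statement, the iteration never closes and the scheme does not yield any nontrivial bound on $L(k)$, let alone the claimed $k^{1-\varepsilon}$ saving.
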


Note that this improved the trivial upper bound $2^k+1$ by a polynomial.
Our main result is an exponential improvement of the upper bound.

\begin{theorem} \label{thm:short mono odd cycle}
    In every $k$-edge-colouring of the complete graph on $2^k+1$ vertices, there exists a monochromatic odd cycle of length $O(k^{3/2}2^{k/2})$. That is, $L(k)=O(k^{3/2}2^{k/2})$.
\end{theorem}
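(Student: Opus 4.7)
The plan is to suppose, for contradiction, that every colour class $G_i$ has odd girth strictly greater than $L=2g+1$ for a parameter $g$, and then to derive $g=O(k^{3/2}2^{k/2})$. The argument combines a trace identity coming from the odd-girth assumption with a Chebyshev polynomial estimate (the approximation-theoretic input) and a multiplicative spectral-chromatic bound across the $k$ colours (the algebraic-combinatorial input).

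First I would use that any closed walk of odd length contains an odd cycle of at most the same length; hence if $G_i$ has odd girth $>2g+1$ then $\trace(A_i^m)=0$ for every odd $m\le 2g+1$, where $A_i$ denotes the adjacency matrix of $G_i$. Equivalently, for every odd polynomial $p$ of degree at most $2g+1$,
\[
\sum_{j=1}^{n} p\bigl(\mu_j^{(i)}\bigr)=0,
\]
where $\mu_1^{(i)}\ge\cdots\ge\mu_n^{(i)}$ are the eigenvalues of $A_i$. I would then apply this identity to $p(x)=T_{2g+1}(x/|\mu_n^{(i)}|)$, which is odd of degree $2g+1$, bounded by $1$ in absolute value on $[-|\mu_n^{(i)}|,|\mu_n^{(i)}|]$, positive and at least $1$ on $(|\mu_n^{(i)}|,\infty)$, and such that no eigenvalue of $A_i$ lies below $-|\mu_n^{(i)}|$. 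Splitting the sum and isolating the contribution of $\mu_1^{(i)}$ yields $T_{2g+1}(\mu_1^{(i)}/|\mu_n^{(i)}|)\le n$, and the hyperbolic-cosine form $T_m(1+\delta)\ge \tfrac{1}{2}\exp(m\sqrt{2\delta})$ (for small $\delta>0$) rearranges this to $\mu_1^{(i)}/|\mu_n^{(i)}|\le 1+Ck^2/g^2$, so each $G_i$ is ``spectrally almost bipartite''.

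It then remains to aggregate these near-bipartite bounds across the $k$ colours. For this I would invoke a spectral-chromatic quantity $\mu(\cdot)$ satisfying (i) $\mu(K_n)\ge n$; (ii) submultiplicativity under edge-union, $\mu(G_1\cup\cdots\cup G_k)\le\prod_i \mu(G_i)$; and (iii) the upper bound $\mu(G)\le 1+\mu_1(A_G)/|\mu_n(A_G)|$. Natural candidates are $\svx$ or Lovász-$\vartheta$-type quantities, with multiplicativity witnessed via tensor products of orthogonal representations. Combining (i)--(iii) with the Chebyshev estimate above gives
\[
2^k+1 \le \prod_{i=1}^{k}\mu(G_i) \le (2+Ck^2/g^2)^k \le 2^k\exp(Ck^3/(2g^2)),
\]
and hence $\log(1+2^{-k})\le Ck^3/(2g^2)$, which forces $g=O(k^{3/2}2^{k/2})$ as required.

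The main obstacle is the algebraic step of designing $\mu$: the spectral bound (iii) is clean for regular graphs via Hoffman's inequality, but the colour classes here need not be regular, so additional care is needed — presumably a tensor-product or SDP argument that accommodates the degree imbalance between the vertices of each $G_i$. Granting this, the Chebyshev calculation is what pins down the final exponent $O(k^{3/2}2^{k/2})$, with the effective small parameter $\delta\asymp 1/(k\,2^k)$ dictated by needing $(1+\delta)^k\ge 1+2^{-k}$.
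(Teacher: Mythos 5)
Your overall architecture matches the paper's: a graph parameter that equals $n$ on $K_n$, is submultiplicative under edge-union, and is close to $2$ on graphs of large odd girth, combined with a trace identity and a Chebyshev estimate. But there is a genuine gap at exactly the step you flag as ``the main obstacle'': property (iii), the upper bound $\mu(G)\le 1+\mu_1(A_G)/|\mu_n(A_G)|$. No parameter with all three of your properties is constructed, and the natural candidates fail (iii): for the Lovász theta function of the complement (equivalently the strict vector chromatic number), the known Hoffman-type inequality goes the \emph{other} way, namely $\vartheta(\overline{G})\ge 1+\mu_1/|\mu_n|$ (by Bilu's theorem this lower bound holds even for the weaker vector chromatic number). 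So a theta-type SDP quantity cannot simultaneously satisfy your (i), (ii) and (iii), and for irregular colour classes no substitute is offered. Running the Chebyshev argument on the adjacency matrix $A_i$ normalised by $|\mu_n^{(i)}|$ therefore leaves you with a statement about adjacency eigenvalue ratios that you cannot convert into the multiplicative bound $\prod_i\mu(G_i)\le(2+Ck^2/g^2)^k$.

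The paper's resolution is to bypass the adjacency matrix entirely. By Lovász's eigenvalue characterization, $\vartheta(\overline{G})=\max_U\lambda_1(M(U))$ over orthonormal representations $U$ of $G$, where $M(U)$ is the Gram matrix. Writing $B=M(U)-I$, the matrix $B$ is supported on $E(G)$, so $\trace(B^{\ell})=0$ for odd $\ell\le g$ exactly as in your trace identity; and, crucially, all eigenvalues of $B$ are automatically at least $-1$ because $M(U)=B+I$ is positive semidefinite. One can therefore apply $T_g$ directly to the spectrum of $B$, with no normalisation by a smallest eigenvalue and no regularity assumption, obtaining $\lambda_1(M(U))\le 2+\frac{1}{2}\bigl((2n-2)^{1/g}-1\bigr)^2$ and hence the needed bound on $\vartheta(\overline{G})$ itself. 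Properties (i) and (ii) then hold for $\vartheta(\overline{\,\cdot\,})$ just as you anticipate, via tensor products of orthonormal representations. Replacing your adjacency-matrix step by this Gram-matrix version turns your outline into a complete proof.
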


In fact we will deduce Theorem \ref{thm:short mono odd cycle} from a more general result that applies to colourings of the complete graph on more vertices as well.

\begin{theorem} \label{thm:short mono odd cycle general}
    Let $0<\delta\leq 1$ such that $n=(1+\delta)2^k$ is an integer. Then in every $k$-edge-colouring of $K_n$ there is a monochromatic odd cycle of length at most $4k^{3/2}\delta^{-1/2}$.
\end{theorem}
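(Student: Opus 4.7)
The plan is to argue by contradiction. Suppose every monochromatic odd cycle has length greater than $L := 4k^{3/2}\delta^{-1/2}$ and set $g := \lfloor L/2 \rfloor$. Each colour class $G_i$ then contains no closed walk of odd length at most $2g+1$, since such a walk would contain an odd cycle of no greater length. This makes each $G_i$ ``$g$-locally bipartite'': for any vertex $v_0$ and any $w$ with $d_{G_i}(v_0,w)\le g$, every walk from $v_0$ to $w$ in $G_i$ of length at most $g$ has a common parity $f_i^{v_0}(w)\in\{0,1\}$, since two walks of opposite parity would concatenate to a closed odd walk of length $\le 2g$. Consider the partial map $F_{v_0}:V\to\{0,1,*\}^k$ sending $w$ to $(f_i^{v_0}(w))_i$ (with $*$ if $w$ is out of range in colour $i$). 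On the intersection ball $B(v_0):=\bigcap_i B_{G_i}(v_0,g)$ the map lands in $\{0,1\}^k$ and is injective: a collision $F_{v_0}(u)=F_{v_0}(v)$ with $u\neq v$, where $c$ is the colour of $uv$, would give a closed walk of odd length at most $2g+1$ in $G_c$ (the $G_c$-geodesic $v_0\to u$, the edge $uv$, and the reverse $G_c$-geodesic $v\to v_0$), producing a forbidden short odd cycle. Hence $|B(v_0)|\le 2^k$ for every pivot $v_0$.

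Since $n=(1+\delta)2^k>2^k$, to contradict this bound it suffices to exhibit a pivot with $|B(v_0)|\ge 2^k+1$. Averaging over $v_0$ and a union bound over colours reduce this to the density estimate
$$\sum_{i=1}^k\bigl|\{(u,w)\in V^2:d_{G_i}(u,w)>g\}\bigr|<\delta n^2,$$
that is, in the typical colour only an $O(\delta/k)$-fraction of vertex pairs lie at $G_i$-distance greater than $g$. This density estimate is where the Chebyshev polynomial $T_g$ enters, via its extremal property $|T_g|\le 1$ on $[-1,1]$ together with $T_g(1+\varepsilon)\ge \tfrac12\exp(g\sqrt{\varepsilon}/2)$ for small $\varepsilon>0$ (the computation sketched in the paper's preamble). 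For each colour, with $\lambda_i:=\lambda_{\max}(A_i)$, the matrix $T_g(A_i/\lambda_i)$ is supported as a bilinear form on $G_i$-distance-$\le g$ pairs and has operator norm $\le 1$; the odd-moment identities $\trace(A_i^{2j+1})=0$ for $j\le g$ coming from the odd-girth hypothesis force the spectrum of $A_i$ to cluster near $\pm\lambda_i$. Combining these with the additive identity $\sum_i A_i=J-I$ (whose spectrum is $\{n-1,-1,\dots,-1\}$) and testing against the Perron eigenvector $\mathbf{1}$ of $J-I$ should yield a scalar inequality of the form $T_g(1+\varepsilon)\lesssim n$ with $\varepsilon\sim\delta/k$. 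Comparing with the extremal lower bound forces $g\lesssim k^{3/2}\delta^{-1/2}$, contradicting the choice of $g$.

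The Chebyshev step is the heart of the argument and the main obstacle. The challenge is to translate the $k$ non-commutative odd-moment identities together with the single additive identity $\sum_i A_i=J-I$ into one scalar inequality $T_g(1+\varepsilon)\lesssim n$ with the correct gap $\varepsilon\sim\delta/k$. I expect this to proceed by applying $T_g$ to each normalized adjacency matrix $A_i/\lambda_i$, comparing the resulting quadratic forms evaluated at $\mathbf{1}$, and then bundling across colours by convexity or Cauchy--Schwarz so that the gap $\delta/k$ is extracted from the single extra $\delta$-fraction of vertices beyond $2^k$. A secondary difficulty is that colours can be wildly non-regular, so the role of $\lambda_i$ vs.\ the average degree $2m_i/n$ must be handled carefully; this should be where the bipartite-like spectral pinching coming from odd girth is essential.
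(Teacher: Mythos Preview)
Your combinatorial setup (the parity map $F_{v_0}$ and its injectivity on the common ball $B(v_0)$) is clean and correct, but the density estimate on which the whole argument rests is false, not just unproved. A single colour class can be arbitrarily sparse: nothing prevents $G_1$ from being a perfect matching, or even a single edge, in which case almost every ordered pair $(u,w)$ satisfies $d_{G_1}(u,w)=\infty$, contributing $\sim n^2$ to the left-hand side of your inequality while the right-hand side is only $\delta n^2$. In such a colouring every intersection ball $B(v_0)$ has size at most $2$, so no pivot can ever witness $|B(v_0)|\ge 2^k+1$, and your contradiction never fires. The spectral heuristics you sketch do not rescue this: the vanishing odd traces $\trace(A_i^{2j+1})=0$ do \emph{not} force the spectrum of $A_i$ to cluster near $\pm\lambda_i$ (a long even path, or any sparse bipartite graph, has all odd moments zero and a spectrum spread across $[-\lambda_i,\lambda_i]$), and the relation $\sum_i A_i=J-I$ gives you one equation, not the $k$ separate diameter bounds you need.

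The paper sidesteps exactly this obstacle by abandoning distances altogether. It works with the parameter $f(G)=\vartheta(\overline{G})$, whose sub-multiplicativity $\vartheta(\overline{G_1\cup\cdots\cup G_k})\le\prod_i\vartheta(\overline{G_i})$ plays the role your pigeonhole-on-balls argument was meant to play, but holds regardless of how sparse any individual $G_i$ is. The Chebyshev polynomial $T_g$ does appear, but it is applied to the Gram matrix of an orthonormal representation of $G_i$ (equivalently, to the optimal matrix in the eigenvalue formulation of $\vartheta$), not to the adjacency matrix; this yields the uniform bound $\vartheta(\overline{G_i})\le 2+\tfrac12\bigl((2n-2)^{1/g}-1\bigr)^2$ for any $n$-vertex $G_i$ of odd girth $>g$, with no connectivity or density hypothesis. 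Multiplying over $i$ and comparing with $\vartheta(\overline{K_n})=n=(1+\delta)2^k$ then gives the scalar inequality $T_g(1+\eps)\lesssim n$ with $\eps\sim\delta/k$ that you were hoping to extract from the adjacency matrices. In short, your instinct about the shape of the final inequality is right, but the object to which $T_g$ must be applied is the theta-function certificate, not $A_i$ itself.
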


Note that Theorem \ref{thm:short mono odd cycle general} implies Theorem \ref{thm:short mono odd cycle} by taking $\delta=2^{-k}$. It also improves a result of Gir\~ao and Hunter \cite{girao2024monochromatic}, who gave a bound $O(k^2\delta^{-1})$ on the length of the shortest monochromatic odd cycle in this setting.

\subsection{Proof overview}

Although our proof is short, we provide a brief outline of it. For simplicity, we will discuss the proof of the less general Theorem \ref{thm:short mono odd cycle} (the proof of Theorem \ref{thm:short mono odd cycle general} is almost identical). Our idea is to find a graph parameter $f(G)$ (taking nonnegative real values) that satisfies the following properties.
\begin{enumerate}
    \item $f(K_n)=n$, \label{item:complete}
    \item if $G_1$ and $G_2$ are graphs on the same vertex set and $G_1\cup G_2$ is the graph with edge set $E(G_1)\cup E(G_2)$, then $f(G_1\cup G_2)\leq f(G_1)f(G_2)$, and \label{item:union}
    \item if $G$ is an $n$-vertex graph that has no odd cycle of length at most $g$, then $f(G)\leq 2+\eps_{n,g}$, where $\eps_{n,g}$ is close to $0$ if $g$ is large. \label{item:odd girth}
\end{enumerate}

Assuming that such a parameter exists, let $n=2^k+1$ and consider a $k$-edge-colouring of $K_n$. Let $G_i$ be the graph formed by edges of the $i$th colour. Then, by properties \ref{item:union} and \ref{item:complete}, we have
$$\prod_{i=1}^k f(G_i)\geq f\left(\bigcup_{i=1}^k G_i\right)=f(K_n)=n=2^k+1.$$
On the other hand, if no $G_i$ contains an odd cycle of length at most $g$, then the left hand side is at most $(2+\eps_{n,g})^k$. By property \ref{item:odd girth}, if $g$ is large, then $\eps_{n,g}$ is close to $0$. But for very small values of $\eps_{n,g}$, we have $(2+\eps_{n,g})^k< 2^k+1$, which is a contradiction.

It remains to prove that a parameter with the desired properties indeed exists. We will show that we can take $f$ to be the Lov\'asz theta function of the complement of $G$.

\section{Proof}

As discussed in the proof outline, we will use the Lov\'asz theta function of a graph. Lov\'asz \cite{Lovasz} introduced this important graph parameter in 1979 in order to bound the Shannon capacity of a graph.


\begin{df}
    For a graph $G$ on vertex set $[n]$, an orthonormal representation $U$ is a collection of unit vectors $u_1, \dots, u_n$ in a Euclidean space $V$ such that if $i\neq j\in [n]$ and $ij$ is not an edge in $G$, then $u_i\cdot u_j = 0$. 
\end{df}

\begin{df}[Lov\'asz theta function]
    For a graph $G$ on vertex set $[n]$, its Lovász number $\vartheta(G)$ is defined to be
    \begin{equation*}
        \min_{c, U} \max_{i\in [n]} (c\cdot u_i)^{-2},
    \end{equation*}
    where the minimum is taken over all orthonormal representations $U$ of $G$ and all unit vectors $c\in V$ (where $V$ is the Euclidean space in which the vectors $u_i$ live). 
\end{df}

We will now verify that $f(G)=\vartheta(\overline{G})$ indeed satisfies the three key properties from the proof outline. Property \ref{item:complete} is well-known and easy to verify.

\begin{lemma} \label{complete graph}
    We have $\vartheta(\overline{K_n}) = n$. 
\end{lemma}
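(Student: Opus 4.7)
The plan is to exploit the fact that $\overline{K_n}$ is the empty graph on $n$ vertices, so the orthogonality condition in the definition of an orthonormal representation applies to \emph{every} pair $i\neq j$. Consequently, any orthonormal representation of $\overline{K_n}$ consists of $n$ pairwise orthogonal unit vectors $u_1,\dots,u_n$ in some Euclidean space $V$; in particular, these vectors span an $n$-dimensional subspace of $V$.

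For the upper bound $\vartheta(\overline{K_n})\leq n$, I would exhibit an explicit orthonormal representation together with a unit vector $c$ achieving the value $n$. Concretely, take $V=\mathbb{R}^n$, let $u_1,\dots,u_n$ be the standard basis, and set $c=\tfrac{1}{\sqrt{n}}(u_1+\cdots+u_n)$, which is a unit vector satisfying $c\cdot u_i=1/\sqrt{n}$ for every $i$. Then $\max_{i\in[n]}(c\cdot u_i)^{-2}=n$.

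For the matching lower bound, I would take an arbitrary orthonormal representation $U=(u_1,\dots,u_n)$ of $\overline{K_n}$ and an arbitrary unit vector $c\in V$, and show $\max_i(c\cdot u_i)^{-2}\geq n$. Writing $c=\sum_{i=1}^n a_i u_i + w$, where $w$ is orthogonal to $\mathrm{span}(u_1,\dots,u_n)$ and $a_i=c\cdot u_i$, Parseval's identity gives $\sum_{i=1}^n a_i^2 + \|w\|^2=\|c\|^2=1$, so $\sum_i a_i^2\leq 1$. By averaging, there exists some $i$ with $a_i^2\leq 1/n$, hence $(c\cdot u_i)^{-2}\geq n$. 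Taking the minimum over $c$ and $U$ then yields $\vartheta(\overline{K_n})\geq n$.

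There is no real obstacle here: the statement is standard, and the only thing to be careful about is reading the definition correctly to see that in the empty graph \emph{every} distinct pair of vectors in an orthonormal representation must be orthogonal, which immediately reduces the problem to a short calculation with an orthonormal system and a unit vector.
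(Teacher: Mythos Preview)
Your argument is correct. The direct computation from the definition works exactly as you describe: since $\overline{K_n}$ has no edges, every orthonormal representation consists of $n$ pairwise orthogonal unit vectors, the explicit choice $c=\tfrac{1}{\sqrt{n}}\sum_i u_i$ gives the upper bound, and Bessel's inequality (which is what your decomposition $c=\sum_i a_i u_i+w$ amounts to) gives the lower bound via averaging.

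The paper takes a different route: rather than computing from the definition, it simply invokes the sandwich theorem $\omega(G)\leq \vartheta(\overline{G})\leq \chi(G)$ and observes that $\omega(K_n)=\chi(K_n)=n$. Your approach is more self-contained and elementary, requiring nothing beyond the definition itself; the paper's approach is shorter on the page but offloads the work to a cited result whose proof (particularly the upper bound $\vartheta(\overline{G})\leq\chi(G)$) is not entirely trivial. For the purposes of this paper either is perfectly adequate, since the lemma is standard.
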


Note that Lemma \ref{complete graph} follows from the ``sandwich theorem'' (see, e.g., \cite{Knuth94}), stating that $\omega(G)\leq \vartheta(\overline{G})\leq \chi(G)$ for every graph $G$.


The next lemma verifies property \ref{item:union} in an equivalent form.

\begin{lemma} \label{lem:theta intersect}
    For graphs $G_1, G_2$ on vertex set $[n]$, let $G_1\cap G_2$ be the graph on vertex set $[n]$ with edge set $E(G_1)\cap E(G_2)$. The Lovász number is sub-multiplicative in the sense that
    $$\vartheta(G_1\cap G_2)\leq \vartheta(G_1)\vartheta(G_2).$$
\end{lemma}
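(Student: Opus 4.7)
The plan is to prove the sub-multiplicativity by tensoring orthonormal representations of $G_1$ and $G_2$ to obtain one for $G_1\cap G_2$.

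First, I would fix orthonormal representations $u_1,\dots,u_n\in V_1$ of $G_1$ with handle $c_1\in V_1$, and $v_1,\dots,v_n\in V_2$ of $G_2$ with handle $c_2\in V_2$, chosen so that they achieve (or approach) the minima $\vartheta(G_1)=\max_i(c_1\cdot u_i)^{-2}$ and $\vartheta(G_2)=\max_i(c_2\cdot v_i)^{-2}$. In the tensor product space $V_1\otimes V_2$, I would then define $w_i:=u_i\otimes v_i$ and take the handle $c:=c_1\otimes c_2$.

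The key verification is that $(w_i)_{i\in[n]}$ is an orthonormal representation of $G_1\cap G_2$. Unit norms are immediate from $\|u_i\otimes v_i\|=\|u_i\|\cdot\|v_i\|=1$. For the orthogonality condition, suppose $ij\notin E(G_1\cap G_2)$ with $i\neq j$; then $ij$ is a non-edge of $G_1$ or of $G_2$, so either $u_i\cdot u_j=0$ or $v_i\cdot v_j=0$, and in either case
\begin{equation*}
w_i\cdot w_j=(u_i\cdot u_j)(v_i\cdot v_j)=0.
\end{equation*}
This is the only place where the specific structure of the intersection graph is used, and it is precisely why intersection (rather than, say, union) is the natural operation for this tensor construction.

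Finally, I would bound $\vartheta(G_1\cap G_2)$ using the handle $c$. For each $i$, $(c\cdot w_i)^2=(c_1\cdot u_i)^2(c_2\cdot v_i)^2$, so $(c\cdot w_i)^{-2}=(c_1\cdot u_i)^{-2}(c_2\cdot v_i)^{-2}$. Taking the maximum over $i$ and applying the elementary inequality $\max_i a_ib_i\leq(\max_i a_i)(\max_i b_i)$ for nonnegative reals yields
\begin{equation*}
\vartheta(G_1\cap G_2)\leq\max_{i\in[n]}(c\cdot w_i)^{-2}\leq\vartheta(G_1)\vartheta(G_2),
\end{equation*}
as desired. I do not anticipate a serious obstacle here: the one subtlety worth articulating carefully is the step showing that non-edges of $G_1\cap G_2$ are non-edges of at least one of $G_1,G_2$, which is exactly what makes the tensor representation respect the intersection.
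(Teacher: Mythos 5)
Your proposal is correct and follows essentially the same argument as the paper: tensoring the two orthonormal representations and handles, checking that non-edges of $G_1\cap G_2$ are non-edges of $G_1$ or $G_2$, and bounding the resulting handle inner products. No gaps.
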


\begin{proof}
    Let 
    \begin{align*}
        \vartheta(G_1) = \max_{i\in [n]} (c_1\cdot (U_1)_i)^{-2}, \\
        \vartheta(G_2) = \max_{i\in [n]} (c_2\cdot (U_2)_i)^{-2}, 
    \end{align*}
    for orthonormal representations $U_1, U_2$ of $G_1, G_2$ on Euclidean spaces $V_1, V_2$ and unit vectors $c_1\in V_1, c_2\in V_2$. Here $(U_1)_i, (U_2)_i$ denote the unit vector in $U_1, U_2$ corresponding to vertex $i$, respectively. Let $V = V_1\otimes V_2$ be the tensor product of $V_1$ and $V_2$. Let $U$ be a collection $u_1, \dots, u_n$ of unit vectors on $V$ given by $u_i = (U_1)_i\otimes (U_2)_i$. Notice that $U$ is an orthonormal representation of $G_1\cap G_2$. Indeed, if $i\neq j$ and $ij$ is not an edge in $G_1\cap G_2$, then we have either $ij\not \in G_1$ in which case $(U_1)_i\cdot (U_1)_j=0$, or $ij\not \in G_2$, in which case $(U_2)_i\cdot (U_2)_j=0$. In both cases we have $u_i\cdot u_j=((U_1)_i\cdot (U_1)_j)((U_2)_i\cdot (U_2)_j)=0$.
    
    Let $c = c_1\otimes c_2$. Then, for any $i\in [n]$, 
    \begin{equation*}
        c\cdot u_i = (c_1 \cdot (U_1)_i)(c_2 \cdot (U_2)_i). 
    \end{equation*}
    Hence, 
    \begin{align*}
        (c\cdot u_i)^{-2} &= (c_1 \cdot (U_1)_i)^{-2}(c_2 \cdot (U_2)_i)^{-2}\\
        &\leq \vartheta(G_1)\vartheta(G_2), 
    \end{align*}
    for any $i\in [n]$. Hence, 
    \begin{equation*}
        \vartheta(G_1\cap G_2)\leq \max_{i\in [n]} (c\cdot u_i)^{-2}\leq \vartheta(G_1)\vartheta(G_2),
    \end{equation*}
    as desired.
\end{proof}

Applying Lemma \ref{lem:theta intersect} with $\overline{G}_1$ and $\overline{G}_2$ in place of $G_1$ and $G_2$, we obtain the following corollary.

\begin{corollary} \label{edge-union}
    For graphs $G_1, G_2$ on vertex set $[n]$, let $G_1\cup G_2$ be the graph on vertex set $[n]$ with edge set $E(G_1)\cup E(G_2)$. Then
    $$\vartheta(\overline{G_1\cup G_2})\leq \vartheta(\overline{G_1})\vartheta(\overline{G_2}).$$
\end{corollary}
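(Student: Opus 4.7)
The plan is to derive the corollary as an immediate consequence of Lemma \ref{lem:theta intersect} combined with the graph-theoretic De Morgan identity. Specifically, I would first observe that on a fixed vertex set $[n]$, the complement operation interchanges union and intersection of edge sets: a pair $ij$ is a non-edge of $G_1 \cup G_2$ exactly when it is a non-edge of both $G_1$ and $G_2$, so
\begin{equation*}
\overline{G_1 \cup G_2} \;=\; \overline{G_1} \cap \overline{G_2}.
\end{equation*}

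Once this identity is in hand, I would simply invoke Lemma \ref{lem:theta intersect} with $\overline{G_1}$ and $\overline{G_2}$ playing the roles of $G_1$ and $G_2$ respectively, yielding
\begin{equation*}
\vartheta\bigl(\overline{G_1} \cap \overline{G_2}\bigr) \;\leq\; \vartheta(\overline{G_1})\,\vartheta(\overline{G_2}).
\end{equation*}
Substituting $\overline{G_1 \cup G_2}$ for $\overline{G_1} \cap \overline{G_2}$ on the left gives exactly the desired inequality. There is no real obstacle here: the content of the corollary lies entirely in Lemma \ref{lem:theta intersect}, and the corollary is just the contravariant reformulation of that lemma under complementation. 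The only thing to be slightly careful about is that De Morgan for graph complements requires a common vertex set, which is given in the statement.
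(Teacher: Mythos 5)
Your proposal is correct and matches the paper's argument exactly: the paper likewise obtains the corollary by applying Lemma \ref{lem:theta intersect} with $\overline{G_1}$ and $\overline{G_2}$ in place of $G_1$ and $G_2$, using the identity $\overline{G_1\cup G_2}=\overline{G_1}\cap\overline{G_2}$. Nothing further is needed.
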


In \cite{Lovasz}, Lov\'asz provided several equivalent descriptions of the theta function, which is one of the reasons why this function is so useful. We will need the following characterization.

\begin{lemma}[Lovász \cite{Lovasz}] \label{alt def}
    Given an orthonormal representation $U$ of $G$, let $M(U)$ denote its Gram matrix given by $M(U)_{ij} = u_i\cdot u_j$. Let $\lambda_1(M(U))$ denote its largest eigenvalue. Then $\vartheta(\overline{G}) = \max_{U}\lambda_1(M(U))$, where the maximum is over all orthonormal representations of~$G$. 
\end{lemma}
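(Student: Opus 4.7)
The plan is to prove the two inequalities separately. The direction $\vartheta(\overline{G}) \geq \lambda_1(M(U))$ for every orthonormal representation $U$ of $G$ has a clean tensor product proof in the spirit of Lemma \ref{lem:theta intersect}, while the reverse direction is more delicate and, I expect, will be the main obstacle, requiring semidefinite programming duality.

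For the easier direction, I would fix an orthonormal representation $U$ of $G$, an orthonormal representation $V$ of $\overline{G}$, and a unit vector $c$ in the ambient space of $V$. As in the proof of Lemma \ref{lem:theta intersect}, the tensors $w_i = u_i \otimes v_i$ form an orthonormal family: for $i \neq j$, if $ij \in E(G)$ then $v_i \cdot v_j = 0$ (since $ij$ is a non-edge of $\overline{G}$), and otherwise $u_i \cdot u_j = 0$. Let $\phi$ be a unit top eigenvector of $M(U)$ with eigenvalue $\lambda = \lambda_1(M(U))$, and set $z = \sum_i \phi_i u_i$ and $y = z \otimes c$. Since $\|z\|^2 = \phi^T M(U)\phi = \lambda$ and $z \cdot u_k = (M(U)\phi)_k = \lambda\phi_k$, we get $\|y\|^2 = \lambda$ and $y \cdot w_i = \lambda\phi_i(c \cdot v_i)$. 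Bessel's inequality applied to the orthonormal family $\{w_i\}$ yields
\[
\lambda^2 \sum_i \phi_i^2 (c \cdot v_i)^2 \;=\; \sum_i (y \cdot w_i)^2 \;\leq\; \|y\|^2 \;=\; \lambda,
\]
and combined with $\sum_i \phi_i^2 = 1$ this forces $\min_i (c \cdot v_i)^2 \leq 1/\lambda$, i.e.\ $\max_i (c \cdot v_i)^{-2} \geq \lambda$. Taking the infimum over $(V, c)$ gives $\vartheta(\overline{G}) \geq \lambda$, and since $U$ was arbitrary the desired inequality follows.

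For the harder direction, my plan is to recast both sides as semidefinite programs and invoke strong duality. Any positive semidefinite matrix $M$ with $M_{ii}=1$ and $M_{ij}=0$ for $ij \notin E(G)$ arises as the Gram matrix of some orthonormal representation of $G$ (by a Cholesky-type factorization), so
\[
\max_U \lambda_1(M(U)) \;=\; \max\bigl\{x^T M x : M \succeq 0,\ M_{ii}=1,\ M_{ij}=0 \text{ for } ij \notin E(G),\ \|x\|=1\bigr\}.
\]
The change of variables $N = \mathrm{diag}(x)\,M\,\mathrm{diag}(x)$ (with a small perturbation to handle any vanishing $x_i$) turns this into
\[
\max\bigl\{\mathbf{1}^T N \mathbf{1} : N \succeq 0,\ \mathrm{tr}(N) = 1,\ N_{ij} = 0 \text{ for } ij \notin E(G)\bigr\},
\]
the standard dual SDP for $\vartheta(\overline{G})$ (since the non-edges of $G$ are precisely the edges of $\overline{G}$). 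Strong duality applies---both programs are strictly feasible, so Slater's condition holds---and identifies this value with the primal $\min_{V,c}\max_i (c \cdot v_i)^{-2} = \vartheta(\overline{G})$. The main obstacle is precisely this duality step; a direct construction of an optimal $(V, c)$ from an optimal $U$ is possible but essentially unpacks the same dual argument.
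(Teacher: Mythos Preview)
The paper does not prove this lemma; it is quoted from Lov\'asz's original paper \cite{Lovasz} without proof. So there is nothing to compare against, and the question is only whether your argument stands on its own.

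Your ``easier'' direction is correct and rather elegant: the tensor construction $w_i=u_i\otimes v_i$ gives an orthonormal family, and Bessel applied to $y=z\otimes c$ with $z=\sum_i\phi_i u_i$ yields $\sum_i\phi_i^2(c\cdot v_i)^2\le 1/\lambda$, from which $\max_i(c\cdot v_i)^{-2}\ge\lambda$ follows by convexity. This is a clean self-contained proof of $\vartheta(\overline G)\ge\max_U\lambda_1(M(U))$ that avoids any SDP machinery.

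For the ``harder'' direction, your reduction is also correct. The identification of $\{M(U):U\text{ orth.\ rep.\ of }G\}$ with $\{M\succeq 0:M_{ii}=1,\ M_{ij}=0\text{ for }ij\notin E(G)\}$ is straightforward, and the change of variables $N=\mathrm{diag}(x)\,M\,\mathrm{diag}(x)$ is a bijection onto the strictly positive-diagonal part of the feasible set of the stated SDP (the remaining feasible $N$, with some $N_{ii}=0$, have zero $i$th row and column by positive semidefiniteness and are reached by limits). What you land on,
\[
\max\bigl\{\mathbf{1}^T N\mathbf{1}:N\succeq 0,\ \mathrm{tr}(N)=1,\ N_{ij}=0\text{ for }ij\in E(\overline G)\bigr\},
\]
is exactly Lov\'asz's Theorem~4 characterisation of $\vartheta(\overline G)$, so at this point you could simply cite that result rather than re-derive it via strong duality. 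If you do want to prove it from scratch, Slater's condition is indeed satisfied (take $N=\tfrac{1}{n}I$ on one side and, on the dual side, $A=(n+1)I-J$), so strong SDP duality applies; but note that translating the dual optimum back into an orthonormal representation $(V,c)$ of $\overline G$ is a nontrivial step (Lov\'asz's Theorem~5), which you have bundled into the phrase ``identifies this value with the primal.'' That is the one place where your write-up is more of a pointer than a proof.
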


We will also need the following well-known properties of the Chebyshev polynomial of the first kind.

\begin{lemma}
    If $g$ is an odd positive integer, then the degree $g$ Chebyshev polynomial of the first kind, denoted as $T_g$, satisfies the following properties.

    \begin{itemize}
        \item $T_g$ is an odd polynomial, i.e. it only contains monomials with odd exponents.
        \item $T_g(x)\geq -1$ for every $x\geq -1$.
        \item $T_g(x)=\frac{1}{2}\left(\left(x-\sqrt{x^2-1}\right)^g+\left(x+\sqrt{x^2-1}\right)^g\right)$ for every $x\geq 1$.
    \end{itemize}
\end{lemma}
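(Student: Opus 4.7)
The plan is to derive all three properties from the standard three-term recurrence $T_0(x)=1$, $T_1(x)=x$, $T_{n+1}(x)=2xT_n(x)-T_{n-1}(x)$, which characterises the Chebyshev polynomials of the first kind. I would first establish property 1 by a short induction on $n$: the base cases are immediate, and in the inductive step the factor of $2x$ in the recurrence flips the parity of $T_n$, while the subtracted term $T_{n-1}$ already carries the correct parity, so $T_n$ has the same parity as $n$. In particular, $T_g$ is odd when $g$ is odd.

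For the explicit formula (property 3), I would use the hyperbolic substitution $x=\cosh t$ with $t\geq 0$, which is valid precisely on $x\geq 1$. Since the identity $\cosh((n+1)t)=2\cosh(t)\cosh(nt)-\cosh((n-1)t)$ matches the Chebyshev recurrence and the base cases $\cosh(0\cdot t)=1$, $\cosh(1\cdot t)=\cosh t=x$ agree, induction yields $T_g(\cosh t)=\cosh(gt)=\tfrac{1}{2}(e^{gt}+e^{-gt})$. Writing $e^{\pm t}=\cosh t\pm\sinh t=x\pm\sqrt{x^2-1}$ then gives property 3. As an immediate by-product, $\cosh(gt)\geq 1$, so $T_g(x)\geq 1\geq -1$ on $[1,\infty)$.

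For property 2 on the remaining range $[-1,1]$, I would use the analogous trigonometric substitution $x=\cos\theta$ together with the identity $\cos((n+1)\theta)=2\cos(\theta)\cos(n\theta)-\cos((n-1)\theta)$; again an induction matching the recurrence gives $T_g(\cos\theta)=\cos(g\theta)$. Hence $|T_g(x)|\leq 1$ on $[-1,1]$, and in particular $T_g(x)\geq -1$ there. Combined with the previous paragraph, property 2 holds on all of $[-1,\infty)$. The main obstacle in all of this is essentially nonexistent: the lemma is entirely classical, and the only subtlety is choosing the correct branch of $\sqrt{x^2-1}$ and the condition $t\geq 0$ when inverting $\cosh$, which is routine.
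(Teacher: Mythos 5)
Your proof is correct and complete. The paper itself gives no proof of this lemma --- it is stated as a collection of well-known facts about Chebyshev polynomials --- and your argument (parity by induction on the three-term recurrence, the identity $T_g(\cosh t)=\cosh(gt)$ for $x\geq 1$, and $T_g(\cos\theta)=\cos(g\theta)$ for $|x|\leq 1$) is exactly the standard derivation one would supply.
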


We are now ready to prove that the complement of the theta function has property \ref{item:odd girth} from the proof outline. In other words, we prove that if $G$ is an $n$-vertex graph that does not contain an odd cycle of length at most $g$, then $\vartheta(\overline{G})$ is small. Under these assumptions, Alon and Kahale \cite{alon1998approximating} proved that $\vartheta(\overline{G})\leq 1+(n-1)^{1/g}$. While their result is tight up to a multiplicative constant depending on $g$ (and hence provides a good bound for \emph{constant} $g$), it would be too weak in our setting, where $g$ is large. Hence, we prove a different bound, using a novel argument.

\begin{lemma} \label{girth result}
    Let $g$ be an odd positive integer and let $G$ be a graph on vertex set $[n]$ which does not contain any odd cycle of length at most $g$. Then $\vartheta(\overline{G})\leq 2 + \frac{1}{2}\left((2n - 2)^{1/g} - 1\right)^2$. 
\end{lemma}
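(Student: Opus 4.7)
The plan is to apply the spectral characterisation in Lemma \ref{alt def}, so it suffices to bound $\lambda_1(M(U))$ for every orthonormal representation $U$ of $G$. Writing $A = M(U) - I$, the matrix $A$ is real symmetric, has zero diagonal, is supported on the edges of $G$ (by the definition of an orthonormal representation), and all its eigenvalues lie in $[-1,\infty)$ since $M(U)$ is positive semidefinite with unit diagonal.

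The core observation is that $\trace(A^k) = 0$ for every odd $k$ with $1 \leq k \leq g$. Indeed, $(A^k)_{ii}$ expands as a sum of weights $\prod_j A_{v_j v_{j+1}}$ over closed walks $v_0 = i, v_1, \dots, v_k = i$ in the support of $A$, which is contained in $G$. A routine decomposition argument (repeatedly splitting at a repeated vertex) shows that any closed walk of odd length $k$ in a graph contains an odd cycle of length at most $k$; since $G$ has no such odd cycle, no such walks exist. Because $T_g$ is an odd polynomial of degree $g$, the matrix $T_g(A)$ is a linear combination of $A, A^3, \dots, A^g$, and hence $\trace(T_g(A)) = 0$ as well.

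I then extract a bound on $\lambda_1(A)$ spectrally. Letting $\mu_1 \geq \mu_2 \geq \dots \geq \mu_n \geq -1$ be the eigenvalues of $A$, the Chebyshev inequality $T_g(x) \geq -1$ for $x \geq -1$ combined with the vanishing trace gives
\begin{equation*}
T_g(\mu_1) = -\sum_{i \geq 2} T_g(\mu_i) \leq n-1.
\end{equation*}
If $\mu_1 < 1$ then $\vartheta(\overline{G}) = 1 + \mu_1 < 2$ and the lemma holds trivially; otherwise, the identity $T_g(x) = \frac{1}{2}((x+\sqrt{x^2-1})^g + (x - \sqrt{x^2-1})^g)$ combined with discarding the second (nonnegative) term yields $\mu_1 + \sqrt{\mu_1^2 - 1} \leq (2n-2)^{1/g} =: y$. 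Solving for $\mu_1$ gives $\mu_1 \leq \frac{1}{2}(y + y^{-1})$, so $\vartheta(\overline{G}) \leq 1 + \frac{1}{2}(y + y^{-1})$. The desired inequality $1 + \frac{1}{2}(y + y^{-1}) \leq 2 + \frac{1}{2}(y-1)^2$ simplifies, after clearing the denominator $2y$, to $(y-1)^3 \geq 0$, which holds since $y \geq 1$.

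The main obstacle is choosing the correct polynomial test: one needs a polynomial that is (i) odd, so that its trace against $A$ vanishes automatically under the odd-girth hypothesis, and (ii) grows as fast as possible outside $[-1,1]$ relative to its maximum on $[-1,1]$, so as to convert the trace identity into a tight bound on $\mu_1$. Chebyshev polynomials of the first kind are precisely the extremisers for this task, which is what enables an exponential saving over the bound of Alon and Kahale in the relevant regime.
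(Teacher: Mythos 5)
Your proof is correct and follows essentially the same route as the paper: the same reduction via Lemma \ref{alt def} to bounding $\mu_1$ of $B=M(U)-I$, the same vanishing-trace argument for odd powers, and the same use of $T_g$ with $T_g(x)\ge -1$ on $[-1,\infty)$ to get $T_g(\mu_1)\le n-1$. The only difference is the final algebra: you invert $\mu_1+\sqrt{\mu_1^2-1}\le y$ exactly to get $\mu_1\le\frac{1}{2}(y+y^{-1})$, whereas the paper uses the cruder lower bound $T_g(x)\ge\frac{1}{2}(1+\sqrt{x^2-1})^g$; both yield the stated estimate.
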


\begin{proof}
    We use the characterisation of $\vartheta(\overline{G})$ given by Lemma \ref{alt def}. For any orthonormal representation $U$ of $G$, let $B = M(U) - I$ be the difference between its Gram matrix and the identity matrix. Note that $B$ has vanishing diagonal, and for $i\neq j\in [n]$, $B_{ij} = 0$ whenever $ij$ is not an edge in $G$. Since $G$ does not contain any odd cycle of length at most $g$, it follows that for each odd $\ell\leq g$, $G$ does not contain a closed walk of length $\ell$, and therefore $\trace (B^{\ell}) = 0$. Let $\mu_n\leq \cdots\leq \mu_1$ be the eigenvalues of the real symmetric matrix $B$. The largest eigenvalue of $M(U)$ is $\mu_1+1$, so, by Lemma \ref{alt def}, it suffices to show that $\mu_1\leq 1 + \frac{1}{2}\left((2n - 2)^{1/g} - 1\right)^2$. We have
    \begin{equation*}
        \sum_{i = 1}^n \mu_i^{\ell} = \trace(B^{\ell})= 0, 
    \end{equation*}
    for any odd $\ell\leq g$. Therefore for any odd polynomial $p$ of degree at most $g$, 
    \begin{equation*}
        \sum_{i = 1}^n p(\mu_i) = 0. 
    \end{equation*}
    We take $p = T_g$ to be the $g$th Chebyshev polynomial of the first kind. As the Gram matrix $M(U) = B + I$ is positive semidefinite, we have $-1\leq \mu_n\leq \cdots\leq \mu_1$. Noting that $T_g(x)\geq -1$ for all $x\geq -1$, we have
    \begin{equation*}
        T_g(\mu_1) = -\sum_{i = 2}^n T_g(\mu_i)\leq n - 1. 
    \end{equation*}
    We have
    $$T_g(x)=\frac{1}{2}\left(\left(x-\sqrt{x^2-1}\right)^g+\left(x+\sqrt{x^2-1}\right)^g\right)\geq \frac{1}{2}\left(1+\sqrt{x^2-1}\right)^g$$
    for every $x\geq 1$. Therefore, assuming $\mu_1\geq 1$ (otherwise we are already done),
    \begin{equation*}
        \frac{1}{2}\left(1+\sqrt{\mu_1^2-1}\right)^g\leq T_g(\mu_1)\leq n - 1. 
    \end{equation*}
    Hence, 
    \begin{equation*}
        \mu_1\leq \sqrt{1 + \left((2n - 2)^{1/g} - 1\right)^2}\leq 1 + \frac{1}{2}\left((2n - 2)^{1/g} - 1\right)^2. 
    \end{equation*}
    As a result, $\lambda_1(M(U)) = \mu_1 + 1\leq 2 + \frac{1}{2}\left((2n - 2)^{1/g} - 1\right)^2$ for any orthonormal representation $U$ of $G$. Thus, $\vartheta(\overline{G})\leq 2 + \frac{1}{2}\left((2n - 2)^{1/g} - 1\right)^2$. 
\end{proof}

We are now ready to prove Theorem \ref{thm:short mono odd cycle general} in the following equivalent form.

\begin{theorem} 
    Let $g$ be an odd positive integer, let $0 \leq \delta <1$ such that $n = (1+\delta)2^k$ is an integer, and let $G_1, \dots, G_k$ be $n$-vertex graphs of odd girth greater than $g$ partitioning the edge set of the complete graph $K_n$. Then $g\leq 4k^{3/2}\delta^{-1/2}$. 
\end{theorem}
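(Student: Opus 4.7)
The plan is to execute the three-step recipe sketched in the proof overview, taking $f(G)=\vartheta(\overline{G})$. Since $G_1,\dots,G_k$ partition $E(K_n)$, iterating Corollary~\ref{edge-union} and applying Lemma~\ref{complete graph} gives
$$n=\vartheta(\overline{K_n})\leq \prod_{i=1}^k \vartheta(\overline{G_i}).$$
Each $G_i$ has odd girth greater than $g$, so Lemma~\ref{girth result} bounds every factor by $2+\tfrac{1}{2}\bigl((2n-2)^{1/g}-1\bigr)^2$. Hence the key inequality to analyze is
$$(1+\delta)2^k\leq \left(2+\tfrac{1}{2}\bigl((2n-2)^{1/g}-1\bigr)^2\right)^k.$$

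To convert this into an upper bound on $g$ I would take $k$th roots and divide by $2$, obtaining $1+\tfrac{1}{4}\bigl((2n-2)^{1/g}-1\bigr)^2\geq (1+\delta)^{1/k}$. Using $(1+\delta)^{1/k}=e^{\ln(1+\delta)/k}\geq 1+\ln(1+\delta)/k$ together with the elementary estimate $\ln(1+\delta)\geq \delta/2$ valid on $[0,1]$, the right-hand side is at least $1+\delta/(2k)$. Rearranging yields $(2n-2)^{1/g}\geq 1+\sqrt{2\delta/k}$, and then taking natural logarithms and applying $\ln(1+x)\geq x/2$ in the relevant range produces
$$\frac{\ln(2n-2)}{g}\geq \tfrac{1}{2}\sqrt{2\delta/k}.$$
Since $2n-2\leq 2^{k+2}$, the numerator on the left is at most $(k+2)\ln 2$, so $g\leq (k+2)\sqrt{2k/\delta}\,\ln 2$; a short calculation, with a trivial case distinction for very small $k$ if needed, then gives $g\leq 4k^{3/2}\delta^{-1/2}$.

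All of the conceptual work has been done in the preceding lemmas: submultiplicativity of $\vartheta(\overline{\cdot})$ under edge-union, the identity $\vartheta(\overline{K_n})=n$, and the Chebyshev-polynomial bound on $\vartheta(\overline{G})$ for graphs of large odd girth. So I expect no genuine obstacle---only bookkeeping. The one step requiring care is avoiding losses of constants when converting between $(1+\delta)^{1/k}-1\sim \delta/k$ and $(2n-2)^{1/g}-1$; it is precisely the \emph{quadratic} in the bound from Lemma~\ref{girth result} that turns a linear $\delta/k$ on one side into the square-root $\sqrt{\delta/k}$ on the other, and this is what ultimately matches the target exponent $3/2$ on $k$ in the final bound.
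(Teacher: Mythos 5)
Your proposal is correct and follows essentially the same route as the paper: iterate Corollary~\ref{edge-union}, apply Lemma~\ref{complete graph} and Lemma~\ref{girth result}, and unwind $(1+\delta)2^k\leq(2+\eps_{n,g})^k$ with elementary estimates (the paper uses $1+\delta\leq\exp(k\eps_{n,g}/2)$ and base-$2$ logarithms where you take $k$th roots and natural logarithms, but the constants work out either way, and in fact $(k+2)\sqrt{2}\ln 2\leq 4k$ for all $k\geq 1$, so no case distinction is needed).
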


\begin{proof}
    Note that $K_n$ is the edge-union of $G_1, \dots, G_k$. Therefore recursively applying Corollary \ref{edge-union}, we have
    \begin{equation*}
        \vartheta(\overline{K_n})\leq \vartheta(\overline{G_1})\cdots\vartheta(\overline{G_k}). 
    \end{equation*}
    By Lemma \ref{girth result}, $\vartheta(\overline{G_i})\leq 2 + \eps_{n,g}$ for each $i = 1,\dots, k$, where $\eps_{n,g}= \frac{1}{2}\left((2n - 2)^{1/g} - 1\right)^2$. By Lemma \ref{complete graph}, $\vartheta(\overline{K_n}) = n = (1+\delta) 2^k$. Hence, 
    \begin{equation*}
        (1+\delta)2^k\leq (2+\eps_{n,g})^k. 
    \end{equation*}
    Therefore,
    \begin{equation*}
        1+\delta\leq (1+\eps_{n,g}/2)^k\leq \exp(k\eps_{n,g}/2), 
    \end{equation*}
    so, using the estimate $\exp(\delta/2)\leq 1+\delta$ which is valid since $0<\delta\leq 1$, we obtain
    $\delta\leq k\eps_{n,g}$. Plugging in the definition of $\eps_{n,g}$, we have
    $$\delta/k\leq \frac{1}{2}\left((2n - 2)^{1/g} - 1\right)^2,$$
    so
    $$\sqrt{2\delta/k}\leq (2n - 2)^{1/g} - 1.$$
    Noting that $2^{x/2} \leq 1 + x$ for $0\leq x\leq 2$, we have $2^{\sqrt{\delta/2k}}\leq 1+\sqrt{2\delta/k}$, and so
    $$2^{g\sqrt{\delta/2k}}\leq 2n-2\leq 2^{2k}.$$
    Hence,
    $$g\sqrt{\delta/2k}\leq 2k,$$
    which implies $g\leq 4k^{3/2}\delta^{-1/2}$, as desired.
\end{proof}

\section{Concluding remarks}

It is natural to wonder whether our results can be strengthened by obtaining a better bound than Lemma \ref{girth result} for the theta function of complements of graphs of high odd girth. It turns out that no significant improvement can be obtained this way. In fact, one cannot even improve Lemma \ref{girth result} significantly for the particular graph $C_g$, which has odd girth $g$ when $g$ is odd. Indeed, it is known \cite{Knuth94} that for every odd $g$, we have $\vartheta(\overline{C_g})=2+\frac{\pi^2}{2g^2}+O(g^{-4})$, and even if we could replace the upper bound in Lemma \ref{girth result} with $2+\frac{\pi^2}{2g^2}$, this would only improve our bound in Theorem~\ref{thm:short mono odd cycle} by a polynomial factor.

The reader might also wonder whether we can choose a graph parameter $f$ that is smaller than the complement of the theta function and has $f(C_g)-2\ll \vartheta(\overline{C_g})-2$ (so that the above issue does not arise), and yet it still satisfies the first two properties required in the proof overview. However, such a parameter does not exist. Indeed, for any such $f$ we would have
\begin{equation}
    g=f(K_g)\leq f(C_g)f(\overline{C_g})\leq \vartheta(\overline{C_g})\vartheta(C_g)=g, \label{eqn:other f}
\end{equation}
where the last equality follows from the fact (see \cite{Lovasz}) that $\vartheta(G)\vartheta(\overline{G})=n$ holds for every vertex-transitive $n$-vertex graph $G$. In particular, equality must hold everywhere in equation (\ref{eqn:other f}) and we have $f(C_g)=\vartheta(\overline{C_g})$.

On the other hand, bounds on the Shannon capacity $\Theta$ for all $n$-vertex graphs whose complements have large odd girth directly translate to bounds on our Ramsey problem. A similar connection (between Shannon capacities of complements of odd cycles and the Ramsey problem we study) was mentioned in a paper of Zhu \cite{zhu2025improved}, building on closely related earlier observations of Erd\H os, McEliece and Taylor \cite{erdHos1971ramsey} and Alon and Orlitsky \cite{alon1995repeated}. Indeed, consider a $k$-edge colouring of $K_n$ so that the colour classes define $n$-vertex graphs $G_1,\dots,G_k$, none of which contain an odd cycle of length at most $g$. Then
$\alpha(\overline{G_1}\boxtimes \overline{G_2}\boxtimes \dots \boxtimes \overline{G_k})\geq n$, where $\boxtimes$ denotes the strong product of graphs and $\alpha$ denotes the independence number. Indeed, when each $G_i$ is identified with the graph formed by the $i$th colour in our $k$-colouring of $K_n$, then the ``diagonal'' $\{(v,v,\dots,v): v\in V(K_n)\}$ is an independent set in $\overline{G_1}\boxtimes \overline{G_2}\boxtimes \dots \boxtimes \overline{G_k}$. Now let $G$ be the disjoint union of graphs $G_1, \dots, G_k$. Clearly $G$ is a graph on $kn$ vertices which does not contain an odd cycle of length at most $g$. But $\overline{G_1}\boxtimes \overline{G_2}\boxtimes \dots \boxtimes \overline{G_k}$ is an induced subgraph of $\overline{G}^k$ (the $k$-th strong power of $\overline{G}$), so we have
$$\Theta(\overline{G})^k\geq \alpha(\overline{G}^k)\geq \alpha(\overline{G_1}\boxtimes \overline{G_2}\boxtimes \dots \boxtimes \overline{G_k})\geq n.$$
Hence, a sufficiently good upper bound for the Shannon capacity of $kn$-vertex graphs whose complements contain no short odd cycles would mean that $g$ cannot be too large, which corresponds to an upper bound on the length of the shortest monochromatic odd cycle in any $k$-colouring of $K_n$. Since the Shannon capacity of every graph is upper bounded by the theta function of the same graph, this approach could lead to an improved bound in our main results. However, the best known upper bound for the Shannon capacity of the complement of an odd cycle comes from the theta function, so we encounter the same barrier as discussed in the first paragraph of this section.


\bibliographystyle{abbrv}
\bibliography{mybib}

\end{document}